\documentclass[11pt]{amsart}
\usepackage[utf8]{inputenc}
\usepackage[T1]{fontenc}
\usepackage{amsmath}
\usepackage{amssymb}
\usepackage{eulerpx,bm}
\usepackage{comment}
\usepackage{longtable}
\usepackage{xcolor,soul} 
\usepackage{colortbl}
\usepackage{hyperref}





\newtheorem*{obs*}{Observation}
\newtheorem{thm}{Theorem}[section]
\newtheorem{prop}[thm]{Proposition}
\newtheorem{lem}[thm]{Lemma}

\newenvironment{customthm}[1]
{\innercustomthm}
{\endinnercustomthm}




\theoremstyle{definition}
\newtheorem{defn}[thm]{Definition}




\theoremstyle{remark}

\newtheorem{rem}[thm]{Remark}


\numberwithin{equation}{section}


\newcommand{\R}{\mathbb{R}}  
\newcommand{\C}{\mathbb{C}}  
\newcommand{\N}{\mathbb{N}}  
\newcommand{\PP}{\mathbb{P}}  



\begin{document}

\title[\tiny Intermediate Cones between the Cones of PSD Forms and SOS]{\small Intermediate Cones between the Cones of Positive Semidefinite Forms and Sums of Squares}
\author{Charu Goel}
\address{Department of Basic Sciences, Indian Institute of Information Technology, Nagpur, India}
\email{charugoel@iiitn.ac.in}
\author{Sarah Hess}
\address{Department of Mathematics and Statistics, University of Konstanz, Konstanz, Germany}
\email{Sarah.Hess@uni-konstanz.de}
\author{Salma Kuhlmann}
\address{Department of Mathematics and Statistics, University of Konstanz, Konstanz, Germany}
\email{Salma.Kuhlmann@uni-konstanz.de}
\begin{abstract}
The cone $\mathcal{P}_{n+1,2d}$ ($n,d\in\N$) of all positive semidefinite (PSD) real forms in $n+1$ variables of degree $2d$ contains the subcone $\Sigma_{n+1,2d}$ of those that are representable as finite sums of squares (SOS) of real forms of half degree $d$. Hilbert \cite{Hilbert} proved that these cones coincide exactly in the \textit{Hilbert cases} $(n+1,2d)$ with $n+1=2$ or $2d=2$ or $(n+1,2d)=(3,4)$. To establish the strict inclusion $\Sigma_{n+1,2d}\subsetneq\mathcal{P}_{n+1,2d}$ in any non-Hilbert case, one can show that verifying the assertion in the \textit{basic non-Hilbert cases} $(4,4)$ and $(3,6)$ suffices.

In this paper, we construct a filtration of intermediate cones between $\Sigma_{n+1,2d}$ and $\mathcal{P}_{n+1,2d}$. This filtration is induced via the Gram matrix approach \cite{CLR} on a filtration of irreducible projective varieties $V_{k-n}\subsetneq \ldots \subsetneq V_n \subsetneq \ldots \subsetneq V_0$ containing the Veronese variety. Here, $k$ is the dimension of the vector space of real forms in $n+1$ variables of degree $d$.
By showing that $V_0,\ldots,V_n$ are varieties of minimal degree, we demonstrate that the corresponding intermediate cones coincide with $\Sigma_{n+1,2d}$. Likewise, for the special case when $n=2$, $V_{n+1}$ is also a variety of minimal degree and the corresponding intermediate cone also coincides with $\Sigma_{n+1,2d}$. We moreover prove that, in the non-Hilbert cases of $(n+1)$-ary quartics for $n\geq 3$ and $(n+1)$-ary sextics for $n\geq 2$, all the remaining cone inclusions are strict. 
\end{abstract}
\maketitle
\vspace*{-0.6cm}\section{Introduction}
\subsection{Overview}
A prominent question in real algebraic geometry is whether a given positive semidefinite (PSD) polynomial of even degree can be represented by a finite sum of squares (SOS) of half degree polynomials. It suffices to consider this question for forms, since the PSD and SOS properties are preserved under homogenization \cite[Proposition 1.2.4.]{Marshall}.
In 1888, Hilbert \cite{Hilbert} classified all cases $(n+1,2d)$ in which $\mathcal{P}_{n+1,2d}=\Sigma_{n+1,2d}$, namely the \textit{Hilbert cases} $(2,2d)$, $(n+1,2)$ and $(3,4)$. For the remaining cases, he proved the existence of PSD-not-SOS forms by reducing this investigation to the \textit{basic non-Hilbert cases} $(4,4)$ and $(3,6)$ (using an argument that allows him to increase the number of variables and the even degree while preserving the PSD-not-SOS property). In the cases $(4,4)$ and $(3,6)$, he gave a non-constructive proof for the existence of PSD-not-SOS forms. The first explicit PSD-not-SOS examples of a ternary sextic and a quaternary quartic were given by Motzkin \cite{Motzkin} and Robinson \cite{Rob} respectively. More examples of PSD-not-SOS ternary sextics and quaternary quartics were given by Choi and Lam \cite{CL, CL2}. 
	
In this paper, we use the Gram matrix method \cite{CLR} to construct a filtration of intermediate cones between $\Sigma_{n+1,2d}$ and $\mathcal{P}_{n+1,2d}$ along irreducible projective varieties containing the Veronese variety. For determining the intermediate cones coinciding with $\Sigma_{n+1,2d}$, we use a result on varieties of minimal degree by Blekherman et al.\ \cite{Blek}. On the other hand, we identify the strictly separating intermediate cones in the non-Hilbert cases $(n+1,4)_{n\geq 3}$ and $(n+1,6)_{n\geq 2}$. To this end, we reconsider the Motzkin and the Choi-Lam forms in light of the Gram matrix method. In the next paper \cite{Sarah}, we shall extend the above results by identifying all strictly separating intermediate cones in any non-Hilbert case via a \textit{degree-jumping principle}. Both papers together will provide a complete generalization of Hilbert's 1888 Theorem along a filtration of irreducible projective varieties containing the Veronese variety.

\subsection{Preliminaries}
For $n,\, d\in\N$, let $\mathcal{F}_{n,d}$ denote the vector space of real forms in $n$ variables of degree $d$; $p\in\mathcal{F}_{n,d}$ is called a \textit{$n$-ary $d$-ic}.
A form $f\in\mathcal{F}_{n+1,2d}$ is \textit{locally positive semidefinite on $K\subseteq\R^{n+1}$} if $f(x)\geq 0$ holds for all $x\in K$. 
In particular, if $K=\R^{n+1}$, then $f$ is \textit{(globally) positive semidefinite (PSD)}.
Moreover, a form $f\in\mathcal{F}_{n+1,2d}$ is a \textit{sum of squares (SOS)}, if there exist some $g_1,\ldots,g_s\in\mathcal{F}_{n+1,d}$ ($s\in\N$) s.t.\ $f=\sum\limits_{i=1}^sg_i^2$.
The cone of all PSD resp.\ SOS forms in $\mathcal{F}_{n+1,2d}$ is denoted by $\bm{\mathcal{P}_{n+1,2d}}$ resp.\ $\bm{\Sigma_{n+1,2d}}$. 

\subsubsection*{\textbf{The Gram Matrix Method and the Veronese Embedding}}
$\Sigma_{n+1,2d}$ can be studied by using the Gram matrix method introduced in \cite{CLR}. The crux of this method is that a SOS representation $f=\sum\limits_{i=1}^sg_i^2$ ($s\in\N$) of a form $f$ corresponds to a real, symmetric, positive semidefinite matrix whose entries come from the coefficients of the half degree forms $g_1,\ldots,g_s$. 

Set
\begin{eqnarray*} k \colon \N \times \N &\to & \N \\ (n,d) &\mapsto&k(n,d):=\dim(\mathcal{F}_{n+1,d})-1=\binom{n+d}{n}-1.
 \end{eqnarray*} We write $k$ instead of $k(n,d)$ whenever $(n,d)$ is clear from the context.
Consider the linear bijective map 
\begin{eqnarray} \label{Q} Q\colon\mathrm{Sym}_{k+1}(\R) &\to&\mathcal{F}_{k+1,2} \\
	\nonumber			A 							&\mapsto&q_A(Z):=ZAZ^t,
\end{eqnarray} where $\mathrm{Sym}_{k+1}(\R)$ is the vector space of all $(k+1)\times(k+1)$ symmetric matrices with real entries and $Z:=(Z_0,\ldots,Z_k)$. We order the set of exponents of monomials in $\mathcal{F}_{n+1,d}$, that is $\{\alpha\in\N_0^{n+1}\mid\vert\alpha\vert=d\}$ (where $\N_0:=\N\cup\{0\}$), lexicographically and recognize that it naturally consists of exactly $k+1$ elements. Thus, we obtain the ordered set of exponents $\{\alpha_0,\ldots,\alpha_k\}$ and write $\alpha_i=(\alpha_{i,0},\ldots,\alpha_{i,n})$. 
Lastly, we enumerate the monomial basis of $\mathcal{F}_{n+1,d}$ along the $\alpha_i$ by setting $m_i(X):=X^{\alpha_i}$ for $i=0,\ldots,k$. Let us now consider the linear surjective map
\begin{eqnarray*} \mathcal{G} \colon \mathrm{Sym}_{k+1}(\R) & \to & \mathcal{F}_{n+1,2d} \\
									A				& \mapsto & f_A(X):=q_A(m_0(X)\ldots m_k(X)).
\end{eqnarray*} We call $\mathcal{G}$ the \textit{Gram map}. For any $f\in\mathcal{F}_{n+1,2d}$, the preimage $\mathcal{G}^{-1}(f)$ is always nonempty. Any matrix $A\in\mathcal{G}^{-1}(f)$ is called a \textit{Gram matrix associated to $f$} and $q_A$ its corresponding \textit{quadratic form (associated to $f$)}. In particular, we can fix a generic $A_f\in\mathcal{G}^{-1}(f)$.
Clearly, any form $f\in\mathcal{F}_{n+1,2d}$ can be interpreted as a form $g\in\mathcal{F}_{m+1,2d}$ for $m\geq n$. The same is true for the associated Gram matrices as the following observation clarifies: 

\begin{obs*} \label{obs:genf} Let $n,m,d\in\N$, $n\leq m$, $i_0,\ldots,i_n\in\{0,\ldots,m\}$ such that $i_0<\ldots<i_n$. For $f\in\mathcal{F}_{n+1,2d}$, define $g(X_0,\ldots,X_m):=f(X_{i_0},\ldots,X_{i_n})$ in $\mathcal{F}_{m+1,2d}$ and set $I:=\{i\in\{0,\ldots,k(m,d)\}\mid \alpha_{i,s}=0 \mbox{ for } s\neq i_0,\ldots,i_n\}$.
	
	For any $B:=(b_{i,j})_{0\leq i,j\leq k(m,d)}\in\mathcal{G}^{-1}(g)$, set $A:=B_I\in\mathrm{Sym}_{k(n,d)+1}(\R)$ to be the submatrix of $B$ given by $B_I:=(b_{i,j})_{i,j\in I}$, then $A$ is a Gram matrix associated to $f$. 
	
	Conversely, for any $A\in\mathcal{G}^{-1}(f)$, expand $A$ to a matrix $B\in\mathrm{Sym}_{k(m,d)+1}(\R)$ with $B_I:=(b_{i,j})_{i,j\in I}=A$ and zero elsewhere, then $B$ is a Gram matrix associated to $g$.
\end{obs*}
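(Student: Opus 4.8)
The plan is to read this as a compatibility statement between the Gram construction in $n+1$ and in $m+1$ variables, and to prove it by tracking the monomial bases through the substitution $X_t\mapsto X_{i_t}$. First I would make the index set $I$ explicit. To each exponent $\alpha=(\alpha_0,\ldots,\alpha_n)\in\N_0^{n+1}$ with $\vert\alpha\vert=d$ associate $\widehat\alpha\in\N_0^{m+1}$ defined by $(\widehat\alpha)_{i_t}:=\alpha_t$ for $t=0,\ldots,n$ and $(\widehat\alpha)_s:=0$ for $s\notin\{i_0,\ldots,i_n\}$. This assignment is a bijection from the exponent set of $\mathcal{F}_{n+1,d}$ onto the set of exponents of $\mathcal{F}_{m+1,d}$ supported on the coordinates $i_0,\ldots,i_n$, which is by definition the set indexed by $I$. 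The one point here deserving a short argument is that the assignment respects the lexicographic order: if $\alpha$ and $\beta$ first differ in coordinate $t$, then $\widehat\alpha$ and $\widehat\beta$ agree in every coordinate strictly before $i_t$ (among $i_0,\ldots,i_{t-1}$ because $\alpha$ and $\beta$ do, and everywhere else because both entries are $0$) and differ at $i_t$ with the same sign. Writing $I=\{\iota(0)<\iota(1)<\cdots<\iota(k(n,d))\}$, this yields $\alpha_{\iota(j)}=\widehat{\alpha_j}$ for every $j$, hence the key monomial identity
\[ m_{\iota(j)}(X_0,\ldots,X_m)=m_j(X_{i_0},\ldots,X_{i_n}),\qquad j=0,\ldots,k(n,d); \]
moreover, for $i\notin I$ the monomial $m_i(X_0,\ldots,X_m)$ carries a positive power of some $X_s$ with $s\notin\{i_0,\ldots,i_n\}$ and therefore vanishes identically once all such $X_s$ are set to $0$.

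Both halves of the statement then follow by direct substitution into the defining identity $f_B(X)=\sum_{i,j}b_{i,j}m_i(X)m_j(X)$ of the Gram map. For the first assertion I would take $B=(b_{i,j})\in\mathcal{G}^{-1}(g)$ and substitute $X_s=0$ for every $s\notin\{i_0,\ldots,i_n\}$ into the polynomial identity $f_B=g$. On the left, the vanishing observation kills all terms with $i\notin I$ or $j\notin I$, and the key identity rewrites what remains as $\sum_{a,b=0}^{k(n,d)}(B_I)_{a,b}\,m_a(X_{i_0},\ldots,X_{i_n})\,m_b(X_{i_0},\ldots,X_{i_n})$, that is, as $f_{B_I}(X_{i_0},\ldots,X_{i_n})$; on the right, $g$ becomes $f(X_{i_0},\ldots,X_{i_n})$. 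Since $X_{i_0},\ldots,X_{i_n}$ are algebraically independent, this gives $\mathcal{G}(B_I)=f$, and as $B_I$ inherits symmetry from $B$, the matrix $A:=B_I$ is a Gram matrix associated to $f$. For the converse I would run the computation backwards: starting from $A=(a_{i,j})\in\mathcal{G}^{-1}(f)$, define $B\in\mathrm{Sym}_{k(m,d)+1}(\R)$ by $b_{\iota(a),\iota(b)}:=a_{a,b}$ and $b_{i,j}:=0$ whenever $i\notin I$ or $j\notin I$; this $B$ is symmetric, and since the monomials indexed outside $I$ now carry zero coefficient, $f_B(X)=\sum_{a,b}a_{a,b}\,m_{\iota(a)}(X)\,m_{\iota(b)}(X)$, which by the key identity equals $f_A(X_{i_0},\ldots,X_{i_n})=f(X_{i_0},\ldots,X_{i_n})=g(X)$. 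Hence $\mathcal{G}(B)=g$ and $B$ is a Gram matrix associated to $g$.

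I do not expect a genuine obstacle: the statement is a bookkeeping lemma, and once $I$ is identified with the monomial basis of $\mathcal{F}_{n+1,d}$ everything collapses to substituting into a polynomial identity. The only step that has to be written with care is the order-preservation of the exponent embedding, since that is exactly what makes the principal submatrix $B_I$ indexed by the lexicographically ordered basis $m_0,\ldots,m_{k(n,d)}$ of $\mathcal{F}_{n+1,d}$ rather than by some permutation of it; with this in place, $f_{B_I}$ is literally the form $f$, not a relabelled copy, and both directions are immediate.
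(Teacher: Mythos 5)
Your proof is correct. The paper states this as an unproved Observation, so there is nothing to compare against; your argument — identifying $I$ with the exponent set of $\mathcal{F}_{n+1,d}$ via an order-preserving embedding, establishing $m_{\iota(j)}(X_0,\ldots,X_m)=m_j(X_{i_0},\ldots,X_{i_n})$, and then reading both directions off the identity $f_B=\sum_{i,j}b_{i,j}m_im_j$ by setting the unused variables to zero — is exactly the bookkeeping the authors intend, and you rightly isolate the order-preservation of the exponent embedding as the one point needing explicit verification.
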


The map $\mathcal{G}$ is not injective in general and hence $\mathcal{G}^{-1}(f)$ is not a singleton. In fact, $\mathcal{G}^{-1}(f)$ is the coset of $A_f$ w.r.t.\ $\ker(\mathcal{G})$. Let us characterize the kernel of $\mathcal{G}$ via the
\textit{(projective) Veronese embedding}
\begin{eqnarray*} V\colon \PP^n & \to & \PP^k \\
							\, [x] & \mapsto & [m_0(x):\ldots:m_k(x)].
\end{eqnarray*}
Observe that $V$ is not surjective in general. Set
\begin{equation} \label{S} \mathcal{S}_V:=\{Z_iZ_j-Z_sZ_t \mid i,j,s,t\in\{0,\ldots,k\}\ \colon \alpha_i+\alpha_j=\alpha_s+\alpha_t\},
\end{equation} then $V(\PP^n)=\mathcal{V}(\mathcal{S}_V)$ \cite[Lemma 2.35]{G}, \cite[Satz 4.3.13]{Plau}. We call $V(\PP^n)$ the \textit{Veronese variety}. 

We now build a bridge between the real affine and the complex projective formulations. For any projective set $W$, let $W(\R)$ denote the \textit{set of real points of $W$} and assume $x$ to be a real representative for $[x]\in W(\R)$. Observe that $V(\PP^n(\R))=V(\PP^n)(\R)$ and
	\begin{eqnarray*}
		\ker(\mathcal{G})&=&\{A\in\mathrm{Sym}_{k+1}(\R) \mid q_A \mbox{\ vanishes\ on\ } V(\PP^n)(\R)\}.
	\end{eqnarray*} Consequently,  for any $f\in\mathcal{F}_{n+1,2d}$,
\begin{eqnarray} \label{Kern}
	\mathcal{G}^{-1}(f)&=& \{A\in\mathrm{Sym}_{k+1}(\R)\mid q_{A-A_f} \mbox{\ vanishes\ on\ } V(\PP^n)(\R)\}.
\end{eqnarray}

\subsubsection*{\textbf{Cone Filtration between $\Sigma_{n+1,2d}$ and $\mathcal{P}_{n+1,2d}$}} \label{SecConeFilt}
We now extend the (locally) positive semidefinite notion to a projective setting. For any projective variety $W\subseteq\PP^k$, we call a form $q\in\mathcal{F}_{k+1,2}$ \textit{locally positive semidefinite (locally PSD) on $W(\R)\subseteq\PP^k(\R)$} if $q(x)\geq 0$ holds for any $[x]\in W(\R)$. In particular, if $W(\R)=\PP^k(\R)$, then we refer to $q$ as \textit{positive semidefinite (PSD)} or \textit{nonnegative}.

A form $f\in\mathcal{F}_{n+1,2d}$ is SOS if and only if there exists a Gram matrix $A$ associated to $f$ such that $q_A$ is PSD \cite[Theorem 2.4]{CLR}. Hence,
\begin{eqnarray}
	\label{CharSOS2}
	\Sigma_{n+1,2d}
	&=&\{f\in\mathcal{F}_{n+1,2d}\mid \exists\ A\in \mathcal{G}^{-1}(f) \colon q_A\vert_{_{\PP^{k}(\R)}}\geq 0\}.
\end{eqnarray} 
Likewise, a form $f\in\mathcal{F}_{n+1,2d}$ is $PSD$ if and only if for any Gram matrix associated to $f$ the corresponding quadratic form is locally PSD on $V(\PP^n)(\R)$. Equivalently, there exists a Gram matrix $A$ associated to $f$ such that $q_A$ is locally PSD on $V(\PP^n)(\R)$. Indeed, assume that for some $A\in\mathcal{G}^{-1}(f)$, the corresponding quadratic form $q_A$ is locally PSD on $V(\PP^n)(\R)$, then $f(X)=q_A(m_0(X),\ldots,m_k(X))=(m_0(X),\ldots,m_k(X))A(m_0,\ldots,m_k(X))^t$ verifies that $f$ is PSD. Equivalently, any Gram matrix associated to $f$ is locally PSD on $V(\PP^n)(\R)$. Thus,
\begin{eqnarray} \label{CharPSD} 
	\mathcal{P}_{n+1,2d} &=&\{f\in\mathcal{F}_{n+1,2d}\mid \exists\ A\in \mathcal{G}^{-1}(f) \colon q_A\vert_{_{\mathcal{V}(\PP^n)(\R)}}\geq 0\}.
\end{eqnarray}
In (\ref{CharSOS2}), the SOS property
of a form in $\mathcal{F}_{n+1,2d}$ was characterized by a PSD
restriction for corresponding quadratic forms on the set of real points over the variety $\PP^k$. Analogously, in (\ref{CharPSD}), the PSD property was described by a local PSD restriction for corresponding quadratic forms on the set of real points of the variety $V(\PP^n)$. For any $i\in\{0,\ldots,k-n\}$, set $$H_i:=\{[z]\in\PP^k\mid\exists\ x\in \C^{n+1}\colon(z_0,\ldots,z_{n+i})=(m_0(x),\ldots,m_{n+i}(x))\}$$ and let $V_i\subseteq \PP^k$ be the Zariski closure of $H_i$ in $\PP^k$ (that is, the smallest projective variety in $\PP^k$ containing $H_i$ ). In particular, $V_i=\mathcal{V}(\mathcal{I}(H_i))$, where $\mathcal{I}(H_i)$ denotes the vanishing ideal of $H_i$. Clearly, $V_{k-n}=V(\PP^n)$ and $V_0=\PP^k$, so that we obtain a filtration of projective varieties $$V(\PP^n)=V_{k-n} \subsetneq \ldots \subsetneq V_0=\PP^k$$ in which each inclusion is strict. Moreover, each inclusion in the corresponding filtration of sets of real points is also strict. That is
\begin{eqnarray} \label{SetOfRealPointsFiltration} V(\PP^n)(\R)=V_{k-n}(\R) \subsetneq \ldots \subsetneq V_0(\R)=\PP^k(\R).
\end{eqnarray}

This allows us to induce a filtration of cones between $\Sigma_{n+1,2d}$ and $\mathcal{P}_{n+1,2d}$ by following the Gram matrix method. In particular, for $i=0,\ldots,k-n$, setting
$$C_i:=\{f\in\mathcal{F}_{n+1,2d}\mid \exists\ A\in \mathcal{G}^{-1}(f) \colon q_A\vert_{_{V_i(\R)}}\geq 0\},$$
we obtain
\begin{equation} \label{ConeFiltration}
	\Sigma_{n+1,2d}=C_0\subseteq \ldots \subseteq C_{k-n}=\mathcal{P}_{n+1,2d}.
\end{equation} Even though the inclusions between sets of real points in (\ref{SetOfRealPointsFiltration}) are all strict, the induced cone inclusions in (\ref{ConeFiltration})need not be.

If $(n+1,2d)$ is a Hilbert case, then (\ref{ConeFiltration}) collapses to one and the same cone, that is,
$$\Sigma_{n+1,2d}=C_0=C_1=\ldots=C_{k-n}=\mathcal{P}_{n+1,2d}.$$ On the other hand, in any non-Hilbert case $(n+1,2d)$
at least one inclusion in (\ref{ConeFiltration}) is strict since $\Sigma_{n+1,2d}$ is a proper subcone of $\mathcal{P}_{n+1,2d}$. However, it is not clear how many among these inclusions are strict and which ones.
Our main goal is to identify all strict inclusions in (\ref{ConeFiltration}). 

\subsection{Structure of the Paper and Main Results} \label{Structure}
This paper is structured as follows. In Section \ref{minnum}, we give an explicit description of $V_0,\ldots,V_{k-n}$ in Theorem \ref{thm:explicitdescription} along the quadratic generators of the Veronese variety in $\mathcal{S}_V$.
In Section \ref{collapse}, we investigate potential equalities in the cone filtration $\Sigma_{n+1,2d}=C_0\subseteq \ldots \subseteq C_n$ for $n,d\in\N$. In the special case of $n=2$, we extend these considerations a step further to $C_n\subseteq C_{n+1}$. This leads us to our first main result:
\begin{customthm}{A} \label{thm:collapse} If $n,\, d\in\N$ and $i=0,\ldots,n-1$, then $C_i=C_{i+1}$. Moreover, if $n\leq 2$, then also $C_n=C_{n+1}$.
\end{customthm}

\noindent For proving this result, our main tool is Proposition \ref{lem:degreejump}
that establishes the applicability of a result on varieties of minimal degree from \cite{Blek}.
In Section \ref{quaternaryquartics}, we consider quaternary quartics and ternary sextics. We construct explicit examples of separating forms in these cases (see Theorem \ref{thm:sepquartics} and Theorem \ref{thm:sepsextics}) thereby establishing all the remaining inclusions in (\ref{ConeFiltration}) being strict. In Section \ref{quartics}, we generalize these findings to $(n+1)$-ary quartics for $n\geq 4$ and $(n+1)$-ary sextics for $n\geq 3$ by proving our second main result (see Theorem \ref{thm:degreeraise} below), which allows us to maintain proper cone inclusions when going over to higher degrees for a fixed number of variables.

\begin{customthm}{B} \label{thm:degreeraise} If $n,d\in\N$ and $i=0,\ldots,k(n,d)-1$ such that $C_i\subsetneq C_{i+1}$ as subcones of $\mathcal{P}_{n+1,2d}$, then $C_i\subsetneq C_{i+1}$ also as subcones of $\mathcal{P}_{n+1,2\delta}$ for $\delta\geq d$.
\end{customthm}

\noindent Using this result and our observations made in Section \ref{quaternaryquartics}, we deduce in Theorem \ref{thm:conesdiffer} and Theorem \ref{thm:conesdifferSextics} that all the remaining cone inclusions in (\ref{ConeFiltration}) are also strict:

\begin{customthm}{C} \label{thm:conesdiffer} For $(n+1,4)_{n\geq 4}$ and $i=n,\ldots,k-n-1$, the strict inclusion $C_i\subsetneq C_{i+1}$ holds.
\end{customthm}

\begin{customthm}{D} \label{thm:conesdifferSextics} For $(n+1,6)_{n\geq 3}$ and $i=n,\ldots,k-n-1$, the strict inclusion $C_{i} \subsetneq C_{i+1}$ holds.
\end{customthm}	
\section{Explicit Description of the Filtration of Varieties containing the Veronese Variety} \label{minnum}
In a first step towards a complete determination of all strict inclusions in the induced cone filtration (\ref{ConeFiltration}), we give an explicit description of the projective varieties $V_0,\ldots,V_{k-n}$ along the quadratic forms in $\mathcal{S}_V$.

Let $(n+1,2d)$ be a non-Hilbert case and for $i=0,\ldots,k-n$, set $$s_i:=\min\{s\in\{1,\ldots,k\}\mid \exists\ t\in\{s,\ldots,k\}\colon \alpha_{s}+\alpha_{t}=\alpha_0+\alpha_{n+i}\}.$$ Fix $t_i\in\{s_i,\ldots,k\}$ such that $\alpha_{s_i}+\alpha_{t_i}=\alpha_0+\alpha_{n+i}$ correspondingly and consider the quadratic form $q_i(Z):=Z_0Z_{n+i}-Z_{s_i}Z_{t_i}\in\mathcal{S}_V$. Define the sets $K_0:=\mathcal{V}(\emptyset)=\C^k$ and $K_i:=\mathcal{V}(q_1(1,Z_1,\ldots,Z_k),\ldots,q_i(1,Z_1,\ldots,Z_k))\subseteq \C^k$ for $i=1,\ldots,k-n$. Moreover, for $i=0,\ldots,k-n$, set $W_i$ to be the Zariski closure of $K_i$ under the embedding 
$$\begin{array}{cccc} \phi\colon&\C^k&\to&\PP^k \\ &(z_1,\ldots,z_k)&\mapsto& [1:z_1:\ldots:z_k].
\end{array}$$

\begin{lem} \label{lem:Kiirred} For $i=0,\ldots,k-n$, the projective variery $W_i$ is irreducible and has codimension $i$.
\end{lem}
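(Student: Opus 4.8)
The plan is to prove both statements — irreducibility and codimension $i$ — simultaneously by induction on $i$, using the fact that each $K_i$ is cut out of $K_{i-1}$ by a single polynomial $q_i(1, Z_1, \ldots, Z_k)$ of a very special shape.

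First I would set up the base case $i = 0$: here $K_0 = \C^k$ and $W_0 = \PP^k$, which is irreducible of codimension $0$, so there is nothing to prove. For the inductive step, assume $W_{i-1}$ is irreducible of codimension $i-1$ (equivalently $\dim W_{i-1} = k - i + 1$). The key structural observation is that the dehomogenized generator $q_i(1, Z_1, \ldots, Z_k) = Z_{n+i} - Z_{s_i} Z_{t_i}$ (using that $\alpha_0 + \alpha_{n+i} = \alpha_{s_i} + \alpha_{t_i}$ and $\alpha_0$ is the lex-largest exponent, so the monomial $Z_0 Z_{n+i}$ dehomogenizes to $Z_{n+i}$) is \emph{linear in the single variable $Z_{n+i}$ with coefficient $1$}, and — crucially — one must check that $Z_{n+i}$ does not already appear among the variables eliminated in earlier steps, i.e.\ that the indices $n+1, \ldots, n+i$ are pairwise distinct and that $s_j, t_j < n+j$ for each $j$. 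Granting this, $K_i$ is the graph of a regular function over $K_{i-1}$ in the remaining coordinates: the projection $\C^k \to \C^{k-1}$ forgetting the $Z_{n+i}$-coordinate restricts to an isomorphism $K_i \xrightarrow{\sim} \pi(K_{i-1})$, and $\pi(K_{i-1})$ is isomorphic to $K_{i-1}$ for the same reason applied at the previous stages. Hence $K_i \cong K_{i-1}$ as varieties, so $K_i$ is irreducible and $\dim K_i = \dim K_{i-1} = k - i + 1$; but as a subvariety of $\C^k$ it sits one coordinate-hyperplane-slice lower, so... more carefully: $K_i$ is defined inside $\C^k$ by $i$ equations, is isomorphic to the irreducible variety $K_{i-1}$ which has dimension $k - i + 1$...

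Let me restate this more carefully, since the dimension bookkeeping is the subtle point. Write $\C^k$ with coordinates $Z_1, \ldots, Z_k$. I claim $K_i$ is, up to reordering coordinates, of the form $\{(Z_{j} : j \notin E_i)\} \times \{\text{graph}\}$ where $E_i = \{n+1, \ldots, n+i\}$ is the set of "eliminated" coordinates and each $Z_{n+j}$ for $j \le i$ is expressed as a polynomial in the coordinates $Z_\ell$ with $\ell \notin E_i$. Concretely, I would prove by induction that $K_i$ is isomorphic, via the coordinate projection $\C^k \to \C^{k-i}$ forgetting $E_i$, to all of $\C^{k-i}$ — provided the elimination is "triangular", meaning each $q_j(1,Z)$ introduces the genuinely new variable $Z_{n+j} \notin E_{j-1} \cup \{s_j, t_j\}$. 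This triangularity is exactly what needs to be verified from the definition of $s_i, t_i$, and it is the heart of the argument; once it holds, $K_i \cong \C^{k-i}$, which is irreducible of dimension $k - i$, so $K_i$ (and its Zariski closure $W_i$ in $\PP^k$, which just adds the hyperplane at infinity intersected appropriately — here one should note taking Zariski closure preserves both irreducibility and dimension) is irreducible of dimension $k - i$, i.e.\ codimension $i$ in $\PP^k$.

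The main obstacle I anticipate is precisely establishing the triangularity claim: that for each $i$, the index $n+i$ is strictly larger than $s_i$ and $t_i$ and is distinct from $n+1, \ldots, n+(i-1)$, so that $q_i(1,Z)$ solves for a fresh variable $Z_{n+i}$ in terms of un-eliminated ones. The distinctness of the $n+i$ is immediate. That $s_i, t_i < n+i$: since $\alpha_0$ is the lex-largest exponent of a degree-$d$ monomial, writing $\alpha_{s_i} + \alpha_{t_i} = \alpha_0 + \alpha_{n+i}$ forces $\alpha_{s_i}, \alpha_{t_i}$ to be lex-smaller than $\alpha_0$, hence (as the $\alpha$'s are lex-ordered) $s_i, t_i \ge 1$; and one must argue $s_i, t_i \le n+i$ — if $t_i > n+i$ then $\alpha_{t_i}$ is lex-smaller than $\alpha_{n+i}$, while $\alpha_{s_i} = \alpha_0 + \alpha_{n+i} - \alpha_{t_i}$ would then be lex-larger than $\alpha_0$, impossible unless $s_i = 0$, which is excluded by $s_i \ge 1$; so $\max(s_i, t_i) \le n+i$, and equality $t_i = n+i$ would force $\alpha_{s_i} = \alpha_0$, i.e.\ $s_i = 0$, again excluded. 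I should also double check the edge behaviour (that such $s_i, t_i$ exist at all, which is asserted implicitly by the definition being well-posed in the non-Hilbert setting) and the dehomogenization of $q_i$. Modulo these combinatorial verifications about the lex order, the geometric conclusion follows mechanically.
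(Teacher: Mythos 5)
Your proof is correct and takes essentially the same route as the paper: the paper's entire argument is the one-line assertion that $K_i$ is isomorphic to the irreducible affine variety $\C^{k-i}$, followed by the observation that the projective closure under $\phi$ preserves irreducibility and dimension. Your triangular-elimination argument --- in particular the verification via the lex order that $1\le s_i\le t_i<n+i$, so that each $q_j(1,Z)$ solves for the fresh variable $Z_{n+j}$ --- is precisely the justification of that isomorphism which the paper leaves implicit.
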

\begin{proof} The affine variety $K_i$ is isomorphic to the ireducible affine variety $\C^{k-i}$. Thus, the projective closure $W_i$ of $K_i$ under the embedding $\phi$ is also irreducible and has codimension $i$.
\end{proof}
		
\begin{thm} \label{thm:explicitdescription} For $i=0,\ldots,k-n$, then $W_i=V_i$.
\end{thm}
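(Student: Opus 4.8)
The plan is to show the two inclusions $W_i \subseteq V_i$ and $V_i \subseteq W_i$ separately, using the fact (Lemma \ref{lem:Kiirred}) that $W_i$ is irreducible of codimension $i$, together with the defining property of $V_i$ as the Zariski closure of $H_i$. First I would unwind the definitions: a point $[z] \in H_i$ has its first $n+1+i$ coordinates of the shape $(m_0(x), \ldots, m_{n+i}(x))$ for some $x \in \C^{n+1}$, while the remaining coordinates $z_{n+i+1}, \ldots, z_k$ are free. The key bookkeeping point is that for each $j = 1, \ldots, i$ the quadratic $q_j(Z) = Z_0 Z_{n+j} - Z_{s_j} Z_{t_j}$ involves, besides $Z_0$ and $Z_{n+j}$, only the variables $Z_{s_j}, Z_{t_j}$ with indices $s_j, t_j \le n+j$ (this needs a short argument: $\alpha_{s_j} + \alpha_{t_j} = \alpha_0 + \alpha_{n+j}$ and the minimality in the definition of $s_j$ forces $t_j \le n+j$, since one legitimate choice is $s = $ the index of $\alpha_0 + \alpha_{n+j}$ split appropriately — I'd check this against the lexicographic ordering). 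Consequently $q_j$ vanishes on $H_i$ (and on $H_{k-n} = V(\PP^n)$) because on such points $z_a = m_a(x)$ for all $a \le n+j$, and $m_0 m_{n+j} = m_{s_j} m_{t_j}$ as monomials when $\alpha_0 + \alpha_{n+j} = \alpha_{s_j} + \alpha_{t_j}$. Dehomogenizing at $Z_0 = 1$, this says $H_i$ (restricted to the affine chart) lies in $K_i$, hence $V_i \subseteq W_i$ after taking Zariski closures.

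For the reverse inclusion $W_i \subseteq V_i$, the cleanest route is dimension-counting: by Lemma \ref{lem:Kiirred}, $W_i$ is irreducible of dimension $k - i$; on the other hand $H_i$ surjects onto $\C^{k-i}$ via the parametrization $(x, z_{n+i+1}, \ldots, z_k) \mapsto [z]$ (the map $x \mapsto (m_0(x), \ldots, m_{n+i}(x))$ being, up to the affine chart $m_0 = x_0^d \ne 0$, a dominant map onto an $n$-dimensional set, and adding the $k - n - i$ free coordinates gives total dimension $k - i$ — here I'd be careful that the first $n+1$ monomials $m_0, \ldots, m_n$ are algebraically independent enough, or rather that the image of $[x] \mapsto [m_0(x) : \cdots : m_n(x)]$ already has dimension $n$, which it does since $V$ is an embedding and these are among its coordinates). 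Thus $V_i = \overline{H_i}$ is irreducible of dimension $\ge k - i$. Combined with $V_i \subseteq W_i$ and $\dim W_i = k - i$, irreducibility of $W_i$ forces $V_i = W_i$.

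The main obstacle I anticipate is the first half — verifying rigorously that each $q_j$ as defined (with the \emph{minimal} $s_j$) only involves coordinate indices $\le n+j$, so that it genuinely vanishes on $H_i$ and not merely on $H_{k-n}$. This is where the lexicographic ordering of the exponents $\alpha_0, \ldots, \alpha_k$ has to be used precisely: one must rule out that the minimizing pair $(s_j, t_j)$ has $t_j > n+j$. I would handle this by exhibiting an explicit admissible split of $\alpha_0 + \alpha_{n+j}$ into two exponents both lexicographically $\le \alpha_{n+j}$ (for instance, since $\alpha_0$ is the lex-largest exponent $= (d,0,\ldots,0)$, one can "move weight" from the first coordinate of $\alpha_0$ toward $\alpha_{n+j}$), which shows $s_j \le n+j$ and then, by the defining equation $\alpha_{s_j} + \alpha_{t_j} = \alpha_0 + \alpha_{n+j}$ with both summands among $\alpha_0, \ldots, \alpha_k$, deduce $t_j \le n+j$ as well — I would double-check this last step, as it is the crux and the rest is routine closure-and-dimension bookkeeping. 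A secondary subtlety is confirming that the chain $W_0 \supsetneq \cdots \supsetneq W_{k-n}$ recovers the \emph{correct} endpoints, i.e. $W_{k-n} = V(\PP^n)$ and $W_0 = \PP^k$; the latter is immediate ($K_0 = \C^k$), and the former follows once we know $\dim W_{k-n} = n = \dim V(\PP^n)$ together with the containment $V(\PP^n) \subseteq W_{k-n}$ already established.
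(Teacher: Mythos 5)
Your reverse inclusion ($W_i \subseteq V_i$ via irreducibility of $W_i$ and the count $\dim \overline{H_i \cap \{z_0 \neq 0\}} = k-i = \dim W_i$) is sound, and your worry about the indices is resolvable in the direction you guess: since $s_j \geq 1$, the first coordinate of $\alpha_{t_j} = \alpha_0 + \alpha_{n+j} - \alpha_{s_j}$ strictly exceeds that of $\alpha_{n+j}$, so $s_j, t_j < n+j$ and each $q_j$ involves only the variables $Z_0, \ldots, Z_{n+j}$. The endpoint identifications $W_0 = \PP^k$ and $W_{k-n} = V(\PP^n)$ are also handled exactly as in the paper.

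The genuine gap is in the forward inclusion $V_i \subseteq W_i$. What you actually establish is (i) $H_i \cap \{z_0 \neq 0\} \subseteq \phi(K_i)$, hence $\overline{H_i \cap \{z_0 \neq 0\}} \subseteq W_i$, and (ii) each $q_j$ vanishes on all of $H_i$, hence $V_i \subseteq \mathcal{V}(q_1,\ldots,q_i)$. Neither yields $V_i \subseteq W_i$: the locus $\mathcal{Z} := \{[z] \in H_i \mid z_0 = 0\}$ is nonempty, it is invisible in the affine chart defining $W_i$, and $\mathcal{V}(q_1,\ldots,q_i)$ is in general strictly larger than $W_i$, containing extra irreducible components supported in $\{Z_0 = 0\}$ — the paper's own Bézout computation in Proposition 3.3 exhibits such components (e.g.\ $\mathcal{V}(Z_0,\ldots,Z_i)$ and $W^{[2]}$). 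So one must separately prove $\overline{\mathcal{Z}} \subseteq W_i$, and this is precisely the bulk of the paper's argument: using elimination (Gröbner bases) one chooses generators $F$ of the ideal of $W_i$ lying in $\C[Z_0,\ldots,Z_{n+i}]$; since $V(\PP^n) \subseteq W_i$, each $f \in F$ has a power in $\langle \mathcal{S}_V \rangle$; and because $f$ does not involve the free coordinates $Z_{n+i+1},\ldots,Z_k$, one may evaluate at the Veronese point $\tilde z = (m_0(x),\ldots,m_k(x))$ extending a given $[z] \in \mathcal{Z}$ to conclude $f(z) = f(\tilde z) = 0$. Your proposal treats the passage "restricted to the affine chart lies in $K_i$, hence $V_i \subseteq W_i$ after taking Zariski closures" as automatic, but the points of $H_i$ at infinity are exactly where this fails without the above argument.
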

	\begin{proof} Clearly, $W_0=\PP^k=V_0$ and $W_{k-n}$ is a subvariety of the Veronese variety. Furthermore, by Lemma \ref{lem:Kiirred}, $W_{k-n}$ is irreducible and has dimension $n$. The same is true for $V(\PP^n)$ \cite[Example 13.4]{JHarris}. Hence, it follows $W_{k-n}=V(\PP^n)=V_{k-n}$.
		
	Let $i=1,\ldots,k-n-1$ and observe that $V_i=\overline{\mathcal{Z}}\cup W_i$, where $\overline{\mathcal{Z}}$ is the Zariski closure of $\mathcal{Z}:=\{[z]\in H_i \mid z_0=0\}$. Hence, it suffices to show $\overline{\mathcal{Z}}\subseteq W_i$. 
	
	According to Buchberger's algorithm for a Gröbner basis construction \cite[Chapter 2 §7 Theorem 2]{CLO}, we fix a set of forms $F\subseteq\C[Z_0,\ldots,Z_{n+i}]$ such that $W_i=\mathcal{V}(F)$. Thus, using the projective Hilbert-Nullstellensatz \cite[Chapter 8 §3 Theorem 9]{CLO}, 
	it is enough to show $\sqrt{\langle F \rangle}\subseteq\sqrt{\mathcal{I}(\mathcal{Z})}$ for which it suffices to verify
	$\langle F \rangle \subseteq\mathcal{I}(\mathcal{Z})$. 
	
	Let $f\in F$ and $[z]\in \mathcal{Z}$ be arbitrary but fixed. Hence, for some $x\in\C^{n+1}$ with $x_0=0$, we get $(z_0,\ldots,z_{n+i})=(m_0(x),\ldots,m_{n+i}(x))$. Observe that $f(z)=f(\tilde{z})$ for $\tilde{z}:=(m_0(x),\ldots,m_k(x))$. Recall $W_i\supseteq V(\PP^n)$, which implies $\mathcal{I}(W_i)\subseteq\mathcal{I}(V(\PP^n))=\sqrt{\langle \mathcal{S}_V \rangle}$. Hence, we can fix some $m\in\N$ and $g\in\langle \mathcal{S}_V \rangle$ such that $f^m=g$. It follows $f(z)^m=f(\tilde{z})^m=(f^m)(\tilde{z})=g(\tilde{z})=0$. So, $f\in\mathcal{I}(\mathcal{Z})$. 
	\end{proof}

	\begin{rem} Since $\dim(\PP^k)-\dim(V(\PP^n))=k-n$, the filtration of irreducible projective varieties $V(\PP^n)=V_0\subsetneq \ldots \subsetneq V_{k-n}=\PP^k$ is of maximal length w.r.t.\ $\subsetneq$.
	\end{rem}
\section{Equality of the first $n+1$ (respectively $n+2$ if $n\leq 2$) Cones} \label{collapse}
Our investigation of the cone inclusions $\Sigma_{n+1,2d}=C_0\subseteq\ldots\subseteq C_n$ (respectively $\Sigma_{n+1,2d}=C_0\subseteq\ldots\subseteq C_n\subseteq C_{n+1}$ if $n=2$) makes use of the result \cite[Theorem 1.1]{Blek} on varieties of minimal degree. Recall that for any irreducible nondegenerate projective variety $W$, $\deg(W)\geq1+\mathrm{codim}(W)$ \cite[Proposition 0]{EH}.
\begin{defn} Let $W$ be an irreducible nondegenerate projective variety with $\deg(W)=1+\mathrm{codim}(W)$, then $W$ is called a projective variety of \textit{minimal degree}.
\end{defn}

Theorem \ref{thm:collapse} will then follow from Proposition \ref{thm:suff1} and Proposition \ref{lem:degreejump} below.

\begin{prop} \label{thm:suff1} Let $n,\, d\in\N$. If $W\subseteq\PP^k$ is a totally real irreducible nondegenerate projective variety of minimal degree with $V(\PP^n)\subseteq W$, then $$\{f\in\mathcal{F}_{n+1,2d}\mid \exists\ A\in \mathcal{G}^{-1}(f) \colon q_A\vert_{_{W(\R)}}\geq 0\}=\Sigma_{n+1,2d}.$$
\end{prop}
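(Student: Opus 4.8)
The plan is to use the characterization of SOS forms in (\ref{CharSOS2}) together with the result \cite[Theorem 1.1]{Blek} on nonnegative quadratic forms on varieties of minimal degree. Recall that \cite[Theorem 1.1]{Blek} asserts that for a totally real irreducible nondegenerate projective variety $W\subseteq\PP^k$, every quadratic form that is nonnegative on $W(\R)$ is a sum of squares of linear forms \emph{modulo the vanishing ideal of $W$} if and only if $W$ is a variety of minimal degree. Since the inclusion $\Sigma_{n+1,2d}\subseteq\{f\mid \exists A\in\mathcal{G}^{-1}(f)\colon q_A\vert_{W(\R)}\geq 0\}$ is immediate (if $q_A\vert_{\PP^k(\R)}\geq 0$, then a fortiori $q_A\vert_{W(\R)}\geq 0$, using $W(\R)\subseteq\PP^k(\R)$), the content is the reverse inclusion.

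First I would fix $f\in\mathcal{F}_{n+1,2d}$ and a Gram matrix $A\in\mathcal{G}^{-1}(f)$ with $q_A$ locally PSD on $W(\R)$. Applying \cite[Theorem 1.1]{Blek} to $W$ (which is totally real, irreducible, nondegenerate, and of minimal degree by hypothesis), we obtain linear forms $\ell_1,\ldots,\ell_s\in\mathcal{F}_{k+1,1}$ and an element $h$ of the vanishing ideal $\mathcal{I}(W)$, with $h$ of degree $2$, such that $q_A=\sum_{i=1}^s\ell_i^2+h$. Equivalently, writing $\ell_i=Z B_i^t$ for suitable vectors and collecting the quadratic part of $h$ into a symmetric matrix, there is $N\in\mathrm{Sym}_{k+1}(\R)$ with $q_N=h$ vanishing on $W(\R)$ and $q_{A-N}=\sum_i\ell_i^2$, i.e.\ $A-N$ is PSD.

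The key remaining step is to argue that $A-N$ is again a Gram matrix associated to $f$, i.e.\ $A-N\in\mathcal{G}^{-1}(f)$. By (\ref{Kern}), this amounts to showing that $q_N$ vanishes on the Veronese variety $V(\PP^n)(\R)$; but this is immediate from $V(\PP^n)\subseteq W$, hence $V(\PP^n)(\R)\subseteq W(\R)$, and $q_N=h\in\mathcal{I}(W)$ vanishes on all of $W(\R)$. Therefore $A-N\in\mathcal{G}^{-1}(f)$ and $q_{A-N}\vert_{\PP^k(\R)}\geq 0$, so by (\ref{CharSOS2}) we conclude $f\in\Sigma_{n+1,2d}$, completing the reverse inclusion and hence the proposition.

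The main obstacle — really a bookkeeping point rather than a deep one — is the passage from the "sum of squares modulo $\mathcal{I}(W)$" statement of \cite[Theorem 1.1]{Blek} to a clean statement at the level of symmetric matrices: one must be careful that the ideal element absorbed can be taken to be a genuine quadratic \emph{form} (degree-$2$ homogeneous), so that it corresponds to a symmetric matrix $N$ with $q_N\in\mathcal{I}(W)$, and that the degree-$2$ part of $\mathcal{I}(W)$ is exactly the space of such $q_N$. This is standard since $W$ is projective (so $\mathcal{I}(W)$ is homogeneous) and nondegenerate (so $\mathcal{I}(W)$ contains no linear forms, forcing the decomposition to respect degree $2$), but it is the place where the hypotheses on $W$ are genuinely used beyond minimal degree, and it should be spelled out.
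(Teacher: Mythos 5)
Your proposal is correct and follows essentially the same route as the paper: apply the Blekherman--Smith--Velasco theorem to write $q_A$ as a sum of squares of linear forms modulo $\mathcal{I}(W)$, observe that the correction term vanishes on $V(\PP^n)(\R)\subseteq W(\R)$ so the resulting PSD matrix is still a Gram matrix for $f$ by (\ref{Kern}), and conclude via (\ref{CharSOS2}). Your explicit treatment of the trivial inclusion and of the degree-$2$ bookkeeping is slightly more careful than the paper's, but the argument is the same.
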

\begin{proof}
	Let $f\in\mathcal{F}_{n+1,2d}$ with $A\in \mathcal{G}^{-1}(f)$ such that $q_A$ is locally PSD on $W(\R)$ be arbitrary but fixed. Thus, by \cite[Theorem 1.1]{Blek}, $q_A\in\R[Z]$ is SOS in $\R[Z]/\mathcal{I}(W)$. Therefore, there exist some linear forms $h_1,\ldots,h_s\in\R[Z]$ ($s\in\N$) with $q_A-\sum\limits_{i=1}^{s} h_i^2 \in \mathcal{I}(W)\subseteq \mathcal{I}(V(\PP^n))$. Set $q(Z):=\sum\limits_{i=1}^{s} h_i^2 \in \Sigma_{k+1,2}$ and fix $B\in Q^{-1}(q)$, then $q_A \equiv q_B$ on $V(\PP^n)(\R)$ testifies $B\in\mathcal{G}^{-1}(f)$. We conclude $f\in \Sigma_{n+1,2d}$.
\end{proof}

To get equality of each inclusion in $\Sigma_{n+1,2d}=C_0\subseteq \ldots \subseteq C_n$ (respectively $\Sigma_{n+1,2d}=C_0\subseteq\ldots\subseteq C_{n+1}$ if $n=2$), it thus remains to verify that $V_n$ (respectively $V_{n+1}$ if $n=2$) are varieties of minimal degree.

\begin{prop} \label{lem:degreejump}  For $n\geq 2$, the following are true:
	\begin{enumerate} 
		\item $\deg(V_i)=i+1$ for $i=0,\ldots,n$
		\item $\deg(V_{n+1})=n+2$ for $n=2$
	\end{enumerate} 
\end{prop}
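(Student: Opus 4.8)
The plan is to compute $\deg(V_i)$ by exploiting the explicit description from Theorem \ref{thm:explicitdescription}, namely $V_i = W_i$, where $W_i$ is the projective closure (under $\phi$) of the affine variety $K_i = \mathcal{V}\big(q_1(1,Z_1,\ldots,Z_k),\ldots,q_i(1,Z_1,\ldots,Z_k)\big)$. By Lemma \ref{lem:Kiirred} each $V_i$ is irreducible of codimension $i$. Since the chosen generators $q_j(Z) = Z_0 Z_{n+j} - Z_{s_j} Z_{t_j}$ are quadrics, a first crude bound from Bézout gives $\deg(V_i) \le 2^i$; this is far from what we need, so the core of the argument is to show that these $i$ quadrics actually cut out $V_i$ as a \emph{complete intersection in a way that only contributes a single extra degree each}, or, more precisely, to identify $V_i$ with a rational normal scroll. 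My expectation is that the decisive structural fact is the following: for $i \le n$ the linear forms $Z_0, \ldots, Z_n$ (the ``degree-$\le 1$-type'' coordinates, i.e.\ those whose exponent vector $\alpha_j$ has $|\alpha_j|$ concentrated so that $\alpha_0 + \alpha_{n+i}$ is representable only in the trivial way) remain algebraically independent on $V_i$, and the passage from $V_{i}$ to $V_{i+1}$ adds exactly one new quadratic relation of the form $Z_0 Z_{n+i+1} = (\text{product of two earlier coordinates})$, which geometrically is the equation of a cone/scroll step that raises the degree by exactly $1$.

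Concretely, I would proceed as follows. First, handle the anchor cases: $\deg(V_0) = \deg(\PP^k) = 1$, and $\deg(V_1) = 2$ since $V_1$ is an irreducible quadric hypersurface (it is the zero set of the single irreducible quadric $q_1$, and $q_1$ is irreducible because it is a difference of two distinct monomials of the form $Z_0Z_{n+1}-Z_{s_1}Z_{t_1}$ with $\{0,n+1\}\neq\{s_1,t_1\}$). Then set up the induction on $i$: assuming $V_i$ is a variety of minimal degree (so $\deg V_i = i+1 = 1 + \mathrm{codim}\, V_i$), I would show $V_{i+1} = V_i \cap \mathcal{V}(q_{i+1})$ with $q_{i+1}$ not vanishing identically on $V_i$ (this uses the strictness $V_{i+1} \subsetneq V_i$ established in the preliminaries via \eqref{SetOfRealPointsFiltration}), so $V_{i+1}$ is a hyperplane-like section of $V_i$ by a quadric. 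The key computation is then that this quadric section of $V_i$ has degree exactly $\deg(V_i) + \text{(something that is } \le 0\text{)}$ — here I would instead argue directly that $V_{i+1}$, being irreducible nondegenerate (Lemma \ref{lem:Kiirred} plus the fact that $W_{i+1}$ spans $\PP^k$, which follows since $K_{i+1}$ still surjects onto the coordinate hyperplane spanned by a spanning subset of coordinates), satisfies $\deg(V_{i+1}) \ge \mathrm{codim}(V_{i+1}) + 1 = i + 2$ by \cite[Proposition 0]{EH}, and then establish the matching upper bound $\deg(V_{i+1}) \le i+2$.

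For that upper bound — which I expect to be the main obstacle — the approach I favor is to produce an explicit parametrization of $V_{i+1}$ exhibiting it as a rational normal scroll (or a projective bundle over $\PP^1$, or a cone over such), for which the degree is classically known to equal $1 + \mathrm{codim}$. The quadrics $q_j$, read as $Z_0 Z_{n+j} = Z_{s_j} Z_{t_j}$, strongly suggest that $V_i$ is the image of a map from a product or bundle where $Z_0$ plays the role of a fiber coordinate and the $q_j$ encode the $2\times 2$ minor (scroll) conditions; making this identification precise, and checking that the specific monomial exponents $\alpha_j$ arising from the Veronese ordering for $i \le n$ (respectively $i \le n+1$ when $n=2$) produce exactly the scroll pattern and no degeneration, is the delicate part. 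The case $n = 2$, $i = n+1 = 3$ in part (2) would be handled as one further step of the same induction, verifying by hand (the ternary case is small) that $V_3$ is still a threefold of minimal degree — plausibly the Segre-type or scroll variety $S(1,1,1)$ or a cone over a rational normal curve — rather than jumping to degree $4$; this last verification is where I would do an explicit monomial bookkeeping to confirm the degree does not jump, which is precisely the phenomenon the section title alludes to.
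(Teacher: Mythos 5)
Your lower bound ($\deg(V_{i+1})\geq i+2$ via \cite[Proposition 0]{EH}) matches the paper, and you correctly identify that the whole difficulty is the matching upper bound. But the proposal has a genuine gap there, stemming from the claim that $V_{i+1}=V_i\cap\mathcal{V}(q_{i+1})$. That identity is false as an equality of projective varieties: $K_{i+1}=K_i\cap\mathcal{V}(q_{i+1}(1,Z))$ holds only in the affine chart $Z_0\neq 0$, and the projective closure of an intersection is not the intersection of the closures. The intersection $W_i\cap\mathcal{V}(q_{i+1})$ is in fact \emph{reducible}, with extra irreducible components supported in the hyperplane at infinity $\{Z_0=0\}$; $V_{i+1}$ is only the component dominating the affine chart. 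Treating $V_{i+1}$ as a "quadric section" of $V_i$ therefore cannot give $\deg(V_{i+1})\leq i+2$ --- a proper irreducible quadric section would have degree $2(i+1)$, and your fallback Bézout bound $2^i$ is, as you note, useless. The paper's argument turns exactly this reducibility into the proof: it exhibits three pairwise distinct irreducible components of $W_i\cap\mathcal{V}(q_{i+1})$, namely $W_{i+1}$, the linear space $\mathcal{V}(Z_0,\ldots,Z_i)$ (degree $1$), and a variety $W^{[2]}$ of degree $i-1$, and observes that Bézout's inequality $2(i+1)\geq \deg(W_{i+1})+1+(i-1)$ combined with the Eisenbud--Harris lower bound $\deg(W_{i+1})\geq i+2$ forces equality throughout. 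So the "missing" degree $i$ is carried by the components at infinity, which your setup does not see.

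Your alternative route --- identifying each $V_{i+1}$ with a rational normal scroll (or a cone over one) and reading off the degree classically --- is consistent with the classification of varieties of minimal degree and could in principle work, but you leave precisely this identification as "the delicate part" without carrying it out, so the upper bound is never actually established. The same applies to part (2): "explicit monomial bookkeeping" for $n=2$, $i=3$ is not supplied; the paper instead runs the same Bézout squeeze on $W_2\cap\mathcal{V}(Z_0Z_5-Z_2^2)$ to get $\deg(W_3)\in\{4,5\}$ and then pins down $4$ by an additional divisibility observation. To repair your proof you would either need to make the scroll parametrization explicit for the specific monomial orderings involved, or adopt the paper's decomposition of the reducible quadric sections.
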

\begin{proof} \begin{enumerate}
		\item Clearly, $V_0=\PP^k$ has degree $1$ and $V_1=W_1=\mathcal{V}(q_1)$ has degree $2$ since $q_1$ is a quadratic form. It therefore remains to determine $\deg(V_i)$ for $i\in\{2,\ldots,n\}$. 
		
		If $n=2$, then $\deg(V_2)=\deg(W_2)=3$ by a straight forward computation and we are done. 
		
		Thus, w.l.o.g.\ assume $n\geq 3$ and argue inductively. By explicit calculations $\deg(V_2)=\deg(W_2)=3$ and $\deg(V_3)=\deg(W_3)=4$. Now, assume that the assertion already holds up to some $i\in\{3,\ldots,n-1\}$. In the inductive step, we now consider $i+1$.
		
		Recall $\deg(V_{i+1})=\deg(W_{i+1})\geq i+2$ by \cite[Proposition 0]{EH}. Set $$B^{[2]}:=\{Z_0,Z_1,Z_{n+1},Z_{n+s}-Z_sZ_{n+2}\mid s=3,\ldots,i\}$$ in $\R[Z_0,Z_1,Z_3,\ldots,Z_k]$ and let $W^{[2]}$ be the projective closure of the affine variety $H^{[2]}:=\mathcal{V}\left(B^{[2]}\right)\subseteq\C^{k}$ under the embedding 
	$$\begin{array}{cccc}
		\phi^{[2]} \colon & \C^k &\to& \PP^k \\ &(z_0,z_1,z_3,\ldots,z_n)&\mapsto&(z_0,z_1,1,z_3,\ldots,z_n).
	\end{array} $$
	Observe that $W_{i+1}, \mathcal{V}(Z_0,\ldots,Z_{i})$ and $W^{[2]}$ are three pairwise distinct irreducible components of $W_{i}\cap\mathcal{V}(q_{i+1})$. Thus, Bézout's Theorem \cite[Theorem 7.7]{Hart} together with the inductive assumption yields that for some $c_1,c_2,c_3\in\N$
	\begin{eqnarray*} \hspace*{1.4cm} 2(i+1)&=&\deg(\mathcal{V}(q_{i+1}))\deg(W_{i})\\
		&\geq& c_1\deg(W_{i+1})+c_2\deg(\mathcal{V}(Z_0,\ldots,Z_{i}))+c_3\deg\left(W^{[2]}\right) \\
		&\geq&\deg(W_{i+1})+\deg(\mathcal{V}(Z_0,\ldots,Z_{i}))+\deg\left(W^{[2]}\right) \\
		&=&\deg(W_{i+1})+1+(i-1) \\
		& \geq & (i+2)+1+(i-1)=2(i+1).
	\end{eqnarray*} So, $\deg(W_{i+1})+1+(i-1)=2(i+1)$ and the irreducible decomposition of $V_i\cap\mathcal{V}(q_{i+1})$ is $V_{i+1}\cup\mathcal{V}(Z_0,\ldots,Z_{i})\cup W^{[2]}$. Therefore, we conclude $\deg(V_{i+1})=\deg(W_{i+1})=i+2$.
	\item Set $W:=\mathcal{V}(Z_3Z_0-Z_1^2,Z_0Z_4-Z_1Z_2,Z_0Z_5-Z_2^2,Z_3Z_5-Z_4^2)\subseteq\PP^k$ and observe that $W=W_3$. Thus, we have $\dim(W)=k-3$ and therefore by \cite[Proposition 0]{EH} $\deg(W)\geq 1+\mathrm{codim}(W)=4$. 
	Moreover, observe that $W_3$ and $\mathcal{V}(Z_0,Z_1,Z_2)$ are two distinct irreducible components of $W_{2}\cap\mathcal{V}(Z_0Z_5-Z_2^2)$.
	Hence, we observe by Bézout's Theorem \cite[Theorem 7.7]{Hart} that
	\begin{eqnarray*}
		6&=&3\cdot2=\deg(W_{2})\cdot\deg(\mathcal{V}(Z_0Z_5-Z_2^2))\\
		&\geq&c_1\deg(W_3)+c_2\deg(\mathcal{V}(Z_0,Z_1,Z_2))\geq\deg(W_3)+1
	\end{eqnarray*} for some $c_1,c_2\in\N$.
	We conclude $\deg(W)=\deg(W_3)\in\{4,5\}$. However, the degree of $W$ is also divisible by $4$. Therefore it follows $\deg(V_3)=\deg(W_3)=\deg(W)=4$.
\end{enumerate}
\end{proof}
\section{Quaternary Quartics and Ternary Sextics} \label{quaternaryquartics}
Applying Theorem \ref{thm:collapse} to $(n+1,2d)=(4,4)$ and $(3,6)$ respectively, the cone filtration (\ref{ConeFiltration}) becomes
\begin{equation} \label{FiltrationQuaternaryQuartics}
	\Sigma_{4,4}=C_0=C_1=C_2=C_3\subseteq C_4 \subseteq C_5 \subseteq C_6=\mathcal{P}_{4,4} \mbox{, and}
\end{equation}
\begin{equation} \label{FiltrationTernarySextics} 
	\Sigma_{3,6}=C_0=C_1=C_2=C_3\subseteq C_4 \subseteq C_5 \subseteq C_6\subseteq C_7=\mathcal{P}_{3,6}
\end{equation}
respectively. We now investigate the remaining inclusions.
\begin{thm} \label{thm:sepquartics}
	If $(n+1,2d)=(4,4)$, then $C_3\subsetneq C_4 \subsetneq C_5 \subsetneq C_6$.
\end{thm}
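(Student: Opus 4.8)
The plan is to reduce to producing, for each of the three gaps $i\in\{3,4,5\}$, a single form $f^{(i)}\in C_{i+1}\setminus C_i$; by (\ref{FiltrationQuaternaryQuartics}) we already know $C_3=\Sigma_{4,4}$ and $C_6=\mathcal{P}_{4,4}$. Here $n=3$, $d=2$, $k=9$, and in the lexicographic ordering of monomials in $\mathcal{F}_{4,2}$ one has $m_7=X_2^2$, $m_8=X_2X_3$, $m_9=X_3^2$; by Theorem \ref{thm:explicitdescription}, in the chart $Z_0\neq 0$ the set $V_i(\R)$ is parametrized by $Z_1,\dots,Z_n$, the Veronese-determined coordinates $Z_{n+1},\dots,Z_{n+i}$, and the free ``tail'' coordinates $Z_{n+i+1},\dots,Z_9$, so that $V_5$ relaxes only $Z_9$, $V_4$ relaxes $Z_8,Z_9$, and $V_3$ relaxes $Z_7,Z_8,Z_9$. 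I would take all three $f^{(i)}$ to be relabellings of the Choi--Lam quartic $Q=X_0^2X_1^2+X_1^2X_2^2+X_0^2X_2^2+X_3^4-4X_0X_1X_2X_3$, placing the obstruction in the appropriate tail coordinate: $f^{(5)}=Q$ itself; $f^{(4)}=X_0^2X_3^2+X_2^2X_3^2+X_0^2X_2^2+X_1^4-4X_0X_1X_2X_3$ (obtained from $Q$ by swapping $X_1$ and $X_3$), which carries an $X_2^2X_3^2$ term but no $X_3^3$ or $X_3^4$; and $f^{(3)}=X_0^2X_1^2+X_1^2X_3^2+X_0^2X_3^2+X_2^4-4X_0X_1X_2X_3$ (from $Q$ by swapping $X_2$ and $X_3$), which carries an $X_2^4$ term but no monomial in which $X_2X_3$ or $X_3^2$ is unavoidable. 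Each $f^{(i)}$ is PSD by AM--GM and, being a relabelling of $Q$, is not SOS.

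The easy halves: the membership $f^{(5)}=Q\in C_6=\mathcal{P}_{4,4}$ is immediate, and the non-membership $f^{(3)}\notin C_3=\Sigma_{4,4}$ is just the classical fact that $f^{(3)}$ is not SOS. For $f^{(4)}\in C_5$ and $f^{(3)}\in C_4$ I would exhibit a Gram matrix whose associated quadratic form involves none of the tail coordinates that $V_{i+1}$ relaxes: for $f^{(4)}$, realize $X_2^2X_3^2$ as $m_8^2$ and $-4X_0X_1X_2X_3$ as $-4m_2m_6$, obtaining a Gram matrix that avoids $m_9$; for $f^{(3)}$, realize $-4X_0X_1X_2X_3$ as $-4m_3m_5$, obtaining a Gram matrix supported on $\{m_1,m_3,m_5,m_6,m_7\}$, which avoids $m_8$ and $m_9$. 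Such a quadratic form is constant along the relaxed coordinates, so in the chart $Z_0=1$ its restriction to $V_{i+1}(\R)$ equals $f^{(i)}$ and hence is nonnegative, while in the chart $Z_0=0$ it reduces to a manifest sum of squares.

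The crux is the non-membership $Q\notin C_5$ and $f^{(4)}\notin C_4$. Fix a Gram matrix $A$ of the relevant form and restrict $q_A$ to the parametrization of $V_i(\R)$: it becomes a quadratic in the free tail coordinates, and its nonnegativity for all parameter values is equivalent to positive semidefiniteness, at every point of the parameter space, of the symmetric matrix built from the entries of $A$ and the Veronese-determined coordinates (the usual Gram matrix of a quadratic polynomial, homogenized by a dummy). The relation $\mathcal{G}(A)=f^{(i)}$ fixes a large block of the entries of $A$; in particular the vanishing of the $X_0X_3^3$-, $X_1X_3^3$-, $X_2X_3^3$- and $X_3^4$-coefficients of $f^{(i)}$ determines the $Z_9$-row of $A$, and for $f^{(4)}$, where the $X_3^4$-coefficient is $0$, positive semidefiniteness then forces that entire row to vanish, reducing the analysis to the single free coordinate $Z_8$. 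In both cases one is left with an inequality of the shape $\lambda\,f^{(i)}(x)\ge\bigl(\lambda\,m(x)+L(x)\bigr)^2$ for all $x\in\R^4$, where $\lambda\in\{A_{99},A_{88}\}$ is a forced scalar that must be positive (otherwise $X_3^4$, resp.\ $X_2^2X_3^2$, could not occur in $f^{(i)}$) and $L$ runs over a linear family of polynomials of degree $\le 2$ cut out by $\mathcal{G}(A)=f^{(i)}$. From here I would extract a contradiction in two steps: (a) specializing to $X_1=0$ and to $X_2=0$, and using that the coefficient of $Z_8^2$, resp.\ $Z_9^2$, is nonnegative, forces $L\equiv 0$; (b) then, for $f^{(4)}$, evaluating at the four (projective) real zeros of the Choi--Lam form, where the inequality becomes an equality, yields a linear system in the remaining four entries of $A$ whose only solution has $A_{88}=0$, contradicting $\lambda>0$; while for $Q$, step (a) alone already leaves the false inequality $X_0^2X_1^2+X_1^2X_2^2+X_0^2X_2^2-4X_0X_1X_2X_3\ge 0$, violated for instance at $(1,1,1,t)$ with $t>3/4$.

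Assembling the three cases yields $C_3\subsetneq C_4\subsetneq C_5\subsetneq C_6$. I expect step (b) to be the main obstacle: one must quantify over the whole coset $A\in A_{f^{(i)}}+\ker(\mathcal{G})$ and check that the freedom in $\ker(\mathcal{G})$ cannot be used to cancel the negativity introduced by the relaxed coordinate — and it is precisely the rich real zero set of the Choi--Lam form, together with the coordinate-axis specializations of step (a), that reduces this to a finite, solvable linear computation.
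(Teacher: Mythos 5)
Your proposal is correct and follows essentially the same route as the paper's proof: permutations of the Choi--Lam quartic chosen so that the top monomial of the support lands on the relevant tail coordinate, an explicit Gram matrix avoiding the relaxed coordinates for membership in $C_{i+1}$, and a coefficient-comparison/point-evaluation argument forcing the tail rows of every Gram matrix to vanish for non-membership in $C_i$. One small caveat: with your realization $-4m_3m_5$ of the cross term for $f^{(3)}$, nonnegativity on $V_4(\R)\cap\{Z_0=0\}$ is not a manifest sum of squares but needs the additional relation $Z_2Z_6=Z_3Z_5$ of $V_4$ (which, together with $z_2=0$ at infinity, kills the cross term), whereas the paper's choice $-4m_2m_6$ sidesteps this.
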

	\begin{proof} Consider the Choi-Lam quaternary quartic
		$$\mathfrak{C}(X_0,X_1,X_2,X_3):=X_0^2X_1^2+X_0^2X_2^2+X_1^2X_2^2+X_3^4-4X_0X_1X_2X_3.$$ From \cite{CL, CL2}, we know that $\mathfrak{C}\in\mathcal{P}_{4,4}\backslash\Sigma_{4,4}$. Thus, the same holds for the permuted forms $\mathfrak{C}^\sigma:=\mathfrak{C}(X_0,X_3,X_1,X_2)$ and $\mathfrak{C}^\tau:=\mathfrak{C}(X_3,X_1,X_2,X_0)$.
		We now show the following:
		\begin{enumerate} 
			\item $\mathfrak{C}^\sigma\in C_4\backslash C_3$,
			\item $\mathfrak{C}^\tau\in C_5\backslash C_4$,
			\item $\mathfrak{C}\in C_6\backslash C_5$.
		\end{enumerate}
		The above three statements can be shown by similar arguments. Thus, it is enough to prove \textit{(1)}. Observe that
		$$\mathfrak{C}^\sigma=m_1(X)^2+m_3(X)^2+m_6(X)^2+m_7(X)^2-4m_2(X)m_6(X).$$
		\begin{enumerate}
			\item[(a)] To prove that $\mathfrak{C}^\sigma\in C_4$, it suffices to show that there exists a Gram matrix $A:=(a_{i,j})_{0\leq i,j\leq 9}$ associated to $\mathfrak{C}^\sigma$ such that $q_A$ is locally PSD on $\mathcal{V}(q_1,q_2,q_3,q_4)(\R)$. Consider the Gram matrix $A:=(a_{i,j})_{0\leq i,j\leq 9}$ associated to $\mathfrak{C}^\sigma$ given by $$a_{i,j}:=\begin{cases} 1, & \mbox{if } i=j\in\{1,3,6,7\} \\ -2, & \mbox{if } \{i,j\}=\{2,6\} \\ 0, & \mbox{else.}\end{cases}$$
		Observe that for any $[z]\in\mathcal{V}(q_1,q_2,q_3,q_4)(\R)$, we have
		$$q_A(z)=\begin{cases} z_1^2+z_3^2+z_6^2+z_7^2, & \mbox{if } z_0=0 \\ \frac{1}{z_0^2}\mathcal{C}^\sigma(1,z_1,z_2,z_3), & \mbox{else.} \end{cases}$$ This implies $q_A(z)\geq 0$ for any $[z]\in\mathcal{V}(q_1,q_2,q_3,q_4)(\R)$.
		\item[(b)] To prove that $\mathfrak{C}^\sigma\not\in C_3$, it suffices to show that for any Gram matrix $B$ associated to $\mathfrak{C}^\sigma$, $q_B$ is not locally PSD on $\phi(K_3)(\R)$. On contrary, assume that there exists a Gram matrix $B=(b_{i,j})_{0\leq i,j\leq 9}$ associated to $\mathfrak{C}^\sigma$ such that $q_B(z)\geq 0$ for any $[z]\in\phi(K_3)(\R)$. A comparision of coefficients between $\mathfrak{C}^\sigma$ and $q_B(m_0(X),\ldots,m_9(X))$ as well as serveral point evaluations of $q_B$ on $\phi(K_3)(\R)$ now reveals that necessarily $b_{7,7}=1$, $b_{i,j}=b_{j,i}=0$ for $j\in\{7,8,9\},i\in\{0,\ldots,j\}$, $(i,j)\neq (7,7)$. This gives $q_B(z)<0$ for $[z]:=\left[1:\ldots:1:0:0:0\right]\in\phi(K_3)(\R)$, which is a contradiction.
		\end{enumerate}
	\vspace*{-0.4cm}
	\end{proof}
\begin{thm} \label{thm:sepsextics}
	If $(n+1,2d)=(3,6)$, then $C_3\subsetneq C_4\subsetneq C_5\subsetneq C_6 \subsetneq C_{7}$.
\end{thm}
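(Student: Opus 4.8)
The plan is to mimic the structure of the proof of Theorem \ref{thm:sepquartics}, replacing the Choi--Lam quaternary quartic by the Motzkin ternary sextic and suitable permutations of it, and then to chase through the corresponding Gram matrices. First I would recall that the Motzkin form $\mathfrak{M}(X_0,X_1,X_2):=X_0^4X_1^2+X_0^2X_1^4+X_2^6-3X_0^2X_1^2X_2^2$ lies in $\mathcal{P}_{3,6}\setminus\Sigma_{3,6}$, so that the same is true for any permutation $\mathfrak{M}^\pi$ of its variables. In the case $(n+1,2d)=(3,6)$ we have $k=k(2,3)=\binom{5}{2}-1=9$, so the filtration has the shape displayed in (\ref{FiltrationTernarySextics}), and after Theorem \ref{thm:collapse} the only inclusions left to separate are $C_3\subseteq C_4\subseteq C_5\subseteq C_6\subseteq C_7$. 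I would therefore exhibit four forms, say $\mathfrak{M}^{\pi_4}\in C_4\setminus C_3$, $\mathfrak{M}^{\pi_5}\in C_5\setminus C_4$, $\mathfrak{M}^{\pi_6}\in C_6\setminus C_5$, and $\mathfrak{M}^{\pi_7}\in C_7\setminus C_6=\mathcal{P}_{3,6}\setminus C_6$, where each $\pi_j$ is a permutation of $\{X_0,X_1,X_2\}$ chosen so that, after rewriting $\mathfrak{M}^{\pi_j}$ in the monomial basis $m_0,\dots,m_9$ of $\mathcal{F}_{3,3}$, the "troublesome'' monomials sit in exactly the coordinate block that distinguishes $V_j$ from $V_{j-1}$ via the quadratic generator $q_j=Z_0Z_{n+j}-Z_{s_j}Z_{t_j}$.

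As in the quaternary-quartic proof, each of the four claims splits into a membership half and a non-membership half, and by symmetry of the argument it suffices to spell out one of them carefully (the others following \emph{mutatis mutandis}). For the membership half "$\mathfrak{M}^{\pi_j}\in C_j$'' I would write down an explicit sparse Gram matrix $A=(a_{i,\ell})_{0\le i,\ell\le 9}$ associated to $\mathfrak{M}^{\pi_j}$ — built directly from the coefficients of $\mathfrak{M}^{\pi_j}$, exactly the analogue of the matrix in part (a) of Theorem \ref{thm:sepquartics} — and then verify that for every real point $[z]\in V_j(\R)=\phi(K_j)(\R)\cup\overline{\mathcal{Z}}(\R)$ one has $q_A(z)\ge 0$. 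The key computation is the dichotomy: on the chart $z_0\ne 0$ one normalizes and recognizes $q_A(z)=z_0^{-2}\,\mathfrak{M}^{\pi_j}(1,z_1,z_2)\ge 0$ by PSD-ness of Motzkin; on the locus $z_0=0$ the defining relations $q_1,\dots,q_j$ force enough coordinates to vanish that $q_A(z)$ collapses to a manifest sum of squares of the remaining $z_i$. For the non-membership half "$\mathfrak{M}^{\pi_j}\notin C_{j-1}$'' I would argue by contradiction: take an arbitrary Gram matrix $B$ associated to $\mathfrak{M}^{\pi_j}$, impose the coefficient-comparison constraints coming from $\mathcal{G}(B)=\mathfrak{M}^{\pi_j}$, then feed in a handful of well-chosen point evaluations of $q_B$ on $\phi(K_{j-1})(\R)$ (points of the form $[1:1:\cdots:1:0:\cdots:0]$ and small perturbations thereof) to pin down the entries of $B$ in the relevant block, and finally produce one explicit real point of $\phi(K_{j-1})(\R)$ at which $q_B$ is strictly negative.

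The genuinely delicate part — and the main obstacle — is the combinatorial bookkeeping in the non-membership halves: one has to choose the permutations $\pi_4,\dots,\pi_7$ so that the monomial supporting the negative cross-term of Motzkin lands in precisely the coordinate slot $n+j$ that is "freed up'' only at stage $V_j$, and then show that no Gram-matrix correction supported on $\ker(\mathcal{G})$ restricted to $V_{j-1}(\R)$ can rescue nonnegativity. This requires knowing the generators $q_1,\dots,q_{j-1}$ explicitly (available from Theorem \ref{thm:explicitdescription} and the definition of the $q_i$) and carefully tracking which off-diagonal entries of $B$ are forced to be zero on $\phi(K_{j-1})(\R)$ versus which remain free on $\phi(K_j)(\R)$; the separation hinges on exactly one entry (the analogue of $b_{7,7}$ in Theorem \ref{thm:sepquartics}) being unconstrained at level $j$ but pinned to an incompatible value at level $j-1$. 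Once the four permutations are correctly identified, each individual verification is a finite, if somewhat tedious, linear-algebra-plus-point-evaluation computation of the same flavor as in Theorem \ref{thm:sepquartics}, and stringing the four separations together immediately yields $C_3\subsetneq C_4\subsetneq C_5\subsetneq C_6\subsetneq C_7$.
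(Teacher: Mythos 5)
Your overall strategy — four separating forms, each handled by a membership half (explicit sparse Gram matrix, dichotomy between the chart $z_0\neq 0$ and the locus $z_0=0$) and a non-membership half (coefficient comparison plus point evaluations forcing entries of an arbitrary Gram matrix) — is exactly the paper's. But there is a genuine gap in your choice of witnesses: you propose to take all four forms to be variable permutations $\mathfrak{M}^{\pi_4},\dots,\mathfrak{M}^{\pi_7}$ of the Motzkin form alone. The Motzkin form is symmetric under $X_0\leftrightarrow X_1$, so its orbit under permutations of the three variables has only \emph{three} distinct elements, and three forms cannot witness four strict consecutive inclusions. Worse, the obstruction is structural, not just a counting issue: writing the degree-$3$ monomials as $m_0=X_0^3,\dots,m_9=X_2^3$ in lex order, the perfect-square monomials occurring in a permuted Motzkin form are always one of the triples $\{m_1^2,m_3^2,m_9^2\}$, $\{m_2^2,m_5^2,m_6^2\}$, $\{m_0^2,m_7^2,m_8^2\}$, i.e.\ the distinguishing pure power is always a sixth power $X_j^6$, landing in slots $9$, $6$, or $0$. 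Slots $6$ and $9$ are exactly what separates $C_3\subsetneq C_4$ and $C_6\subsetneq C_7$, but the middle separations $C_4\subsetneq C_5$ and $C_5\subsetneq C_6$ require forms whose critical square sits in slots $7$ and $8$, i.e.\ whose distinguished monomials are the mixed powers $X_1^4X_2^2=m_7^2$ and $X_1^2X_2^4=m_8^2$ \emph{without} a higher-indexed square also present; no permutation of $\mathfrak{M}$ does this.

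The paper fills precisely this hole by importing a second PSD-not-SOS ternary sextic, the Choi--Lam form $\mathfrak{L}(X_0,X_1,X_2)=X_0^4X_1^2+X_0^2X_2^4+X_1^4X_2^2-3X_0^2X_1^2X_2^2$, whose squares are $m_1^2,m_5^2,m_7^2$ (so $\mathfrak{L}\in C_5\setminus C_4$), together with its permutation $\mathfrak{L}^\tau=\mathfrak{L}(X_0,X_2,X_1)$, whose squares are $m_2^2,m_3^2,m_8^2$ (so $\mathfrak{L}^\tau\in C_6\setminus C_5$); the Motzkin permutations $\mathfrak{M}^\sigma=\mathfrak{M}(X_0,X_2,X_1)$ and $\mathfrak{M}$ itself handle $C_4\setminus C_3$ and $C_7\setminus C_6$. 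With that substitution your membership and non-membership arguments go through as you describe; without it, the two middle inclusions remain unseparated.
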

\begin{proof} Consider the Motzkin form
	$$\mathfrak{M}(X_0,X_1,X_2):=X_0^4X_1^2+X_0^2X_1^4+X_2^6-3X_0^2X_1^2X_2^2$$
	and the Choi-Lam ternary sextic
	\begin{eqnarray*}
			\mathfrak{L}(X_0,X_1,X_2)&:=&X_0^4X_1^2 + X_0^2X_2^4 + X_1^4X_2^2 - 3X_0^2X_1^2X_2^2.
		\end{eqnarray*} From \cite{Motzkin} and \cite{CL, CL2}, we know $\mathfrak{M},\mathfrak{L}\in\mathcal{P}_{3,6}\backslash\Sigma_{3,6}$. Thus, the same holds for the permuted forms $\mathfrak{M}^\sigma:=\mathfrak{M}(X_0,X_2,X_1)$ and $\mathfrak{L}^\tau:=\mathfrak{L}(X_0,X_2,X_1)$. We now show the following:
	\begin{enumerate} 
		\item $\mathfrak{M}^\sigma\in C_4\backslash C_3$,
		\item $\mathfrak{L}\in C_5\backslash C_4$,
		\item $\mathfrak{L}^\tau\in C_6\backslash C_5$,
		\item $\mathfrak{M}\in C_7\backslash C_6$.
	\end{enumerate}
	These statements follow analogously as in the proof of Theorem \ref{thm:sepquartics} except that an appropriate choice of the Gram matrix $A:=(a_{i,j})_{0\leq i,j\leq 9}$ associated to $\mathfrak{M}^\sigma$ such that $q_A$ is locally PSD on $\mathcal{V}(q_1,q_2,q_3,q_4)(\R)$ is given by $$a_{i,j}:=\begin{cases}1, & \mbox{if } i=j\in\{2,4,5,6\} \\ 4, & \mbox{if } i=j=3 \\ -2, & \mbox{if } \{i,j\}=\{1,6\} \mbox{ or } \{i,j\}=\{3,5\} \\ 0, & \mbox{else.} \end{cases}$$
\end{proof}

From the cone filtrations (\ref{FiltrationQuaternaryQuartics}), (\ref{FiltrationTernarySextics}) and Theorem \ref{thm:sepquartics}, Theorem \ref{thm:sepsextics}, we conclude that
\begin{equation} \label{44Final} \Sigma_{4,4}=C_0=C_1=C_2=C_3\subsetneq C_4 \subsetneq C_5 \subsetneq C_6=\mathcal{P}_{4,4}, \mbox{ and}
\end{equation}
\begin{equation} \label{36Final} \Sigma_{3,6}=C_0=C_1=C_2=C_3\subsetneq C_4 \subsetneq C_5 \subsetneq C_6\subsetneq C_7=\mathcal{P}_{3,6}.
\end{equation}
This determines all the strict inclusions in (\ref{ConeFiltration}) for quaternary quartics and ternary sextics.
\section{$(n+1)$-ary Quartics for $n\geq 4$ and $(n+1)$-ary Sextics for $n\geq 3$} \label{quartics}
We now investigate the cone filtration (\ref{ConeFiltration}) in the non-Hilbert cases of $(n+1)$-ary quartics ($n\geq 4$) and $(n+1)$-ary sextics ($n\geq 3$) by generalizing our findings (\ref{44Final}) and (\ref{36Final}), see (\ref{QuarticsFinal}) below. As explained in Section \ref{Structure},  for the proof of Theorem \ref{thm:conesdiffer} and Theorem \ref{thm:conesdifferSextics} below, we first need to prove Theorem \ref{thm:degreeraise}.
		\begin{proof}[\bf Proof of Theorem \ref{thm:degreeraise}] When $\delta=d$, the statement is clear. On the other hand, when $\delta\geq d+1$, then we argue inductively.
		For the purpose of this proof, denote by $C_i^\delta$ and $C_{i+1}^\delta$ the subcones $C_i$ and $C_{i+1}$ respectively of $\mathcal{P}_{n+1,2\delta}$. Likewise, denote by $V_i^\delta$ and $V_{i+1}^{\delta}$ the subvarieties $V_i$ and $V_{i+1}$ respectively of $\PP^{k(n,\delta)}$.
		Choose $f\in C_{i+1}^\delta\backslash C_i^\delta$ and set $g(X):=X_0^2f(X)$ in $\mathcal{F}_{n+1,2(\delta+1)}$. We claim $g\in C_{i+1}^{\delta+1}\backslash C_i ^{\delta+1}$.
		
		Since $f\in C_{i+1}^\delta$, we can fix some $A\in\mathcal{G}^{-1}(f)$ with $q_A(y)\geq 0$ for any $[y]\in V_{i+1}^\delta(\R)$. Set $I:=\{0,\ldots,k(n,\delta)\}$ and expand $A$ to a real matrix $B:=(b_{i,j})_{0\leq i,j\leq k(n,\delta+1)}\in\mathrm{Sym}_{k(n,\delta+1)+1}(\R)$ with $B_I:=(b_{i,j})_{i,j\in I}=A$ and zero elsewhere. Observe that $B\in\mathcal{G}^{-1}(g)$. For any $[z]\in V_{i+1}^{\delta+1}(\R)$ define $[y]:=[z_0:\ldots:z_{k(n,\delta)}]\in V_{i+1}^\delta(\R)$, then $q_B(z)=q_A(y)\geq 0$. This shows $g\in C_{i+1}^{\delta+1}$.
		
		It remains to show $g\not\in C_i^{\delta+1}$. It is enough to verify that $q_B$ is not locally PSD on $V_i^{\delta+1}(\R)$ for any $B\in\mathcal{G}^{-1}(g)$. On contrary, assume that there exists a $B:=(b_{i,j})_{0\leq i,j\leq k(n,\delta+1)}$ such that $q_B(z)\geq 0$ for any $[z]\in V_i^{\delta+1}(\R)$. A comparision of coefficients between $g$ and $q_B(m_0(X),\ldots,m_{k(n,\delta+1)}(X))$ as well as serveral point evaluations of $q_B$ on $V_i^{\delta+1}(\R)$ now reveals that necessarily $b_{i,j}=b_{j,i}=0$ for $j=k(n,\delta)+1,\ldots,k(n,\delta+1)$ and $i=0,\ldots,j$. Hence, the submatrix $A:=B_I:=(b_{i,j})_{i,j\in I}$ of $B$ is a Gram matrix associated to $f$. Since $f\not\in C_i^{\delta}$, it is possible to fix a $[y]\in V_i^\delta(\R)$ with $q_A(y)<0$. Set $[z]:=[y_0:\ldots:y_{k(n,\delta)}:0:\ldots:0]\in V_i^{\delta+1}(\R)$, then $q_B(z)=q_A(y)<0$, which is a contradiction.
		\end{proof}

\begin{proof}[\bf Proof of Theorem \ref{thm:conesdiffer}] For the purpose of this proof, denote by $C_j^3$ the subcone $C_j$ of $\mathcal{P}_{4,4}$ for $j=3,\ldots,6$, and by $C_i^n$ and $C_{i+1}^n$ the subcones $C_i$ and $C_{i+1}$ respectively of $\mathcal{P}_{n+1,4}$. Likewise, set $K_3^3$ to be the affine subvariety $K_3$ of $\C^{k(3,2)}$ and set $K_i^n$ to be the affine subvariety $K_i$ of $\C^{k(n,2)}$. Let $q_{j,3}$ denote the quadratic form $q_j$ in $\mathcal{F}_{k(3,2)}$ for $j\in\{1,2,3,4\}$ and let $q_{j,n}$  denote the quadratic form $q_j$ in $\mathcal{F}_{k(n,2)}$ for $j=1,\ldots,i+1$.
We distinguish three cases:
	\begin{enumerate}
	\item $m_{n+i}(X)=X_jX_n$ for some $j\in\{2,\ldots,n-1\}$
	\item $m_{n+i}(X)=X_jX_l$ for some $j,l\in\{2,\ldots,n-1\}$ such that $j\leq l$
	\item $m_{n+i}(X)=m_{k-n-1}(X)$
\end{enumerate}
The above three cases can be treated by similar arguments for which
it suffices to find $g\in\mathcal{P}_{n+1,4}$ with the following two properties:
\begin{enumerate}
	\item[(a)] There exists a Gram matrix $B$ associated to $g$ such that $q_B$ is locally PSD on $V(q_{1,n},\ldots,q_{i+1,n})(\R)$.
	\item[(b)] For any Gram matrix $B$ associated to $g$, there exists a $[y]\in\phi(K_{i}^n)$ with $q_B(y)<0$.
\end{enumerate}
Let us determine a form $g$ satisfying \textit{(a)} and \textit{(b)} in case \textit{(1)}. 
	\begin{enumerate} 
		\item[(a)]
		Fix $f\in\mathcal{C}_4^3\backslash C_3^3$ and $A\in\mathcal{G}^{-1}(f)$ as in the proof of Theorem \ref{thm:sepquartics}. Set $g(X_0,\ldots,X_n):=f(X_0,X_{j-1},X_j,X_{n})\in\mathcal{P}_{n+1,4}$.  As in the observation given in Section \ref{SecConeFilt}, choose $I\subseteq\{0,\ldots,k(n,2)\}$ appropriately. This allows us to induce the Gram matrix $B:=A_I$ associated to $g$. Define $$\begin{array}{cccc} \pi\colon&\C^{k(n,2)+1} &\to& \C^{k(3,2)+1} \\ &(z_0,\ldots,z_{k(n,2)})&\mapsto& (z_i)_{i\in I}.\end{array}$$ 
		
		For any $[z]\in V(q_{1,n},\ldots,q_{i+1,n})(\R)$, we get $q_B(z)=q_{A}(y)\geq 0$ for $[y]:=[\pi(z)]\in\mathcal{V}(q_{1,3},\ldots,q_{4,3})(\R)$.
		\item[(b)] Let $B\in\mathcal{G}^{-1}(g)$ be arbitrary but fixed and let $A\in\mathcal{G}^{-1}(f)$ be obtained from $B$ using the observation from Section \ref{SecConeFilt}. Since $f\not\in C_3^3$, we can fix some $[y]\in\phi({K}_3^3)(\R)$ with $q_A(y)<0$. Define $[z]\in\phi(K_i^n)(\R)$ with $\pi(z)=y$ and zero elsewhere, then $q_{B}(y)=q_{A}(z)<0$.
		\end{enumerate}
	Case (2) can be verified analogously for $g_1(X_0,\ldots,X_n):=f_1(X_0,X_{1},X_j,X_{l+1})$ with $f_1\in C_5^3\backslash C_4^3$ and case (3) for $g_2(X_0,\ldots,X_n):=f_2(X_0,X_1,X_{n-1},X_{n})$ with $f_2\in C_6^3\backslash C_5^3$.
	\end{proof}
	\begin{proof}[\bf Proof of Theorem \ref{thm:conesdifferSextics}] 
		Theorem \ref{thm:conesdiffer} yields that $C_{n}\subsetneq\ldots\subsetneq C_{k(n,2)-n}$ as a subcone filtration of $\mathcal{P}_{n+1,4}$. Hence, also $C_{n}\subsetneq\ldots\subsetneq C_{k(n,2)-n}$ as a subcone filtration of $\mathcal{P}_{n+1,6}$ by Theorem \ref{thm:degreeraise}. For proving that each inclusion in the remaining subcone filtration $C_{k(n,2)-n}\subsetneq \ldots\subsetneq C_{k(n,3)-n}$ of $\mathcal{P}_{n+1,6}$ is strict,
		we distinguish five cases:
\begin{enumerate}
	\item $m_{n+i}(X)=m_{k(n,2)-n}(X)$
	\item $m_{n+i}(X)=X_j^3$ for some $j,l\in\{1,\ldots,n-2\}$, $j\leq l\leq n-1$
	\item $m_{n+i}(X)=X_jX_lX_n$ for some $j,l\in\{1,\ldots,n-1\}$, $j\leq l$
	\item $m_{n+i}(X)=X_jX_n^2$ for some $j\in\{1,\ldots,n-1\}$
	\item $m_{n+i}(X)=X_jX_lX_r$ for some $j\in\{1,\ldots,n-2\}$, $l,r\in\{j+1,\ldots,n-1\}$, $j+1\leq l$, $l\leq r$
\end{enumerate}
These cases can be treated similarly as the ones in the proof of Theorem \ref{thm:conesdiffer} by reducing to ternary sextics and consideration of extensions $g\in \mathcal{F}_{n+1,6}$ of appropriate separating forms $f\in\mathcal{F}_{3,6}$. These are respectively:
\begin{enumerate}
	\item $g(X_0,\ldots,X_n):=f(X_0,X_1,X_n)$ with $f\in C_4\backslash C_3$
	\item $g(X_0,\ldots,X_n):=f(X_0,X_{j},X_{l+1})$ with $f\in C_5\backslash C_4$
	\item $g(X_0,\ldots,X_n):=f(X_0,X_{j},X_{l+1})$ with $f\in C_6\backslash C_5$
	\item $g(X_0,\ldots,X_n):=f(X_0,X_{j},X_{j+1})$ with $f\in C_7\backslash C_6$
\end{enumerate}
The same argumentation also remains valid for case \textit{(5)}. However, here, we have to reduce to the non-Hilbert case $(4,6)$ and find an appropriate separating form $f\in C_{11}\backslash C_{10}$ with a corresponding extension $g\in\mathcal{F}_{n+1,6}$. For example,
$f(X):=X_1^2\mathfrak{C}^\tau(X)$ and $g(X_0,\ldots,X_n):=f(X_0,X_j,X_l,X_{r+1})$ could be considered.
\end{proof}

For $(n+1,4)_{n\geq 4}$ and $(n+1,6)_{n\geq 3}$, Theorems \ref{thm:collapse}, \ref{thm:conesdiffer} and \ref{thm:conesdifferSextics} together yield
\begin{equation} \label{QuarticsFinal} \Sigma_{n+1,2d}=C_0=\ldots=C_n\subsetneq C_{n+1} \subsetneq \ldots \subsetneq C_{k(n,d)-n}=\mathcal{P}_{n+1,2d}.
\end{equation}

\noindent \textbf{Concluding Remark.} It follows from (\ref{44Final}), (\ref{36Final}) and (\ref{QuarticsFinal}) that the exact number of strictly separating cones in (\ref{ConeFiltration}) is
	\begin{enumerate}
		\item $\mu(n,2):=(k(n,2)-n)-(n+1)$ for $(n+1,4)_{n\geq 3}$, and
		\item $\mu(n,3):=(k(n,3)-n)-(n+1)$ for $(n+1,6)_{n\geq 2}$.
	\end{enumerate}

\section*{Acknowledgements}
We wish to thank \textit{Mathematisches Forschungsinstitut Oberwolfach} for its hospitality. 
The first author acknowledges the support through \textit{Oberwolfach Leibniz Fellows} program. The second author is grateful for the \textit{Oberwolfach Leibniz Graduate Students} award.
The third author acknowledges the support of \textit{Ausschuss für Forschungsfragen} der Universität Konstanz.
The authors are thankful to Maria Infusino and Victor Vinnikov for useful discussions.
\bibliographystyle{alpha}
\bibliography{references2}

\begin{thebibliography}{CLO15}

\bibitem[BSV16]{Blek}
G.~Blekherman, G.~G. Smith, and M.~Velasco.
\newblock Sums of squares and varieties of minimal degree.
\newblock {\em J. Amer. Math. Soc.}, 29, no. 3:893--913, 2016.

\bibitem[CL76]{CL}
M.~D. Choi and T.~Y. Lam.
\newblock An old question of {H}ilbert.
\newblock In {\em Conference on Quadratic Forms -- 1976}, pages 385--405. Proc.
  Conf., Queen's Univ., Kingston, Ont., 1976.

\bibitem[CL78]{CL2}
M.~D. Choi and T.~Y. Lam.
\newblock Extremal positive semidefinite forms.
\newblock {\em Math. Ann.}, 231, no.1:1--18, 1977/78.

\bibitem[CLO15]{CLO}
D.~A. Cox, J.~Little, and D.~O'Shea.
\newblock {\em Ideals, Varieties, and Algorithms. An Introduction to
  Computational Algebraic Geometry and Commutative Algebra}.
\newblock Fourth Edition. Undergraduate Texts in Mathematics. Springer, 2015.

\bibitem[CLR95]{CLR}
M.~D. Choi, T.~Y. Lam, and B.~Reznick.
\newblock Sums of squares of real polynomials.
\newblock In {\em $K$-Theory and Algebraic Geometry: Connections with Quadratic
  Forms and Division Algebras, Part 2}, volume~58 of {\em Proc. Sympos. Pure
  Math.}, pages 103--126. Amer. Math. Soc., 1995.

\bibitem[EH87]{EH}
D.~Eisenbud and J.~Harris.
\newblock On varieties of minimal degree (a centennial account).
\newblock In {\em Algebraic geometry}, volume~46 of {\em Proc. Sympos. Pure
  Math.}, pages 3--13. Amer. Math. Soc., 1987.

\bibitem[GHK]{Sarah}
C.~Goel, S.~Hess, and S.~Kuhlmann.
\newblock Cones separating the cone of positive semidefinite forms from that of
  sums of squares.
\newblock {\em In preparation}.

\bibitem[Goe14]{G}
C.~Goel.
\newblock {\em Extension of {H}ilbert’s 1888 Theorem to Even Symmetric
  Forms}.
\newblock PhD thesis, University of Konstanz,
  \href{https://kops.uni-konstanz.de/handle/123456789/29352?locale-attribute=en}{Konstanzer
  Online - Publikations - System}, 2014.

\bibitem[Har77]{Hart}
R.~Hartshorne.
\newblock {\em Algebraic Geometry}, volume~52 of {\em Graduate Texts in
  Mathematics}.
\newblock Springer-Verlag, 1977.

\bibitem[Har92]{JHarris}
J.~Harris.
\newblock {\em Algebraic Geometry. A first course}, volume 133 of {\em Graduate
  Texts in Mathematics}.
\newblock Springer-Verlag, 1992.

\bibitem[Hil88]{Hilbert}
D.~Hilbert.
\newblock {\em {Ü}ber die {D}arstellung definiter {F}ormen als {S}umme von
  {F}ormenquadraten}, volume~32 of {\em Math. Ann.}
\newblock Springer-Verlag, 1888.

\bibitem[Mar08]{Marshall}
M.~Marshall.
\newblock {\em Positive Polynomials and Sums of Squares}, volume 146 of {\em
  Mathematical Surveys and Monographs}.
\newblock American Mathematical Society, 2008.

\bibitem[Mot67]{Motzkin}
T.~S. Motzkin.
\newblock The arithmetic-geometric inequalities.
\newblock In {\em Inequalities (Proc. Sympos. Wright-Patterson Air Force Base,
  Ohio, 1965)}, pages 205--224. Academic Press, 1967.

\bibitem[Pla20]{Plau}
D.~Plaumann.
\newblock {\em Einführung in die Algebraische Geometrie ({G}erman)}, volume~1.
\newblock Springer-Verlag, 2020.

\bibitem[Rob73]{Rob}
R.~M. Robinson.
\newblock Some definite polynomials which are not sums of squares of real
  polynomials.
\newblock In {\em Selected questions of algebra and logic (a collection
  dedicated to the memory of A. I. Mal’cev)}, pages 264--282. “Nauka”
  Sibirsk. Otdel. Novosibirsk, 1973.

\end{thebibliography}
\end{document}